\newtheorem{theorem}{Theorem}
\theoremstyle{definition}
\newcommand{\ess}[1]{\hat{#1}}
\newcommand{\fixation}{\rho}
\newcommand{\incentive}{\varphi}
\g@addto@macro{\endabstract}{\@setabstract}
\newcommand{\authorfootnotes}{\renewcommand\thefootnote{\@fnsymbol\c@footnote}}%
\begin{document}
\date{}

\begin{center}
  \LARGE 
  Incentive Processes in Finite Populations \par \bigskip

  \normalsize
  \authorfootnotes
  Marc Harper\footnote{corresponding author, email: marcharper@ucla.edu}\textsuperscript{1}, Dashiell Fryer\textsuperscript{2}\par \bigskip

  \textsuperscript{1}Department of Genomics and Proteomics, UCLA, \par
  \textsuperscript{2}Department of Mathematics, Pomona College\par \bigskip
  
\end{center}

\begin{abstract}
We define the incentive process, a natural generalization of the Moran process incorporating evolutionary updating mechanisms corresponding to well-known evolutionary dynamics, such as the logit, projection, and best-reply dynamics. Fixation probabilities and internal stable states are given for a variety of incentives, including new closed-forms, as well as results relating fixation probabilities for members of two one-parameter families of incentive processes. We show that the behaviors of the incentive process can deviate significantly from the analogous properties of deterministic evolutionary dynamics in some ways but are similar in others. For example, while the fixation probabilities change, their ratio remains constant.
\end{abstract}


\section{Introduction}
In this manuscript we define an incentive-based generalization of the Moran process \cite{moran1958random} \cite{moran1962statistical} \cite{taylor2004evolutionary} \cite{nowak2006evolutionary} called the incentive process, which captures Fermi selection, \cite{traulsen2009stochastic}, and gives analogous Markov processes for every possible incentive. In \cite{fryer2012existence}, Fryer introduces a functional parameter called an incentive and defines the continuous incentive dynamics, simultaneously describing many common dynamics in evolutionary game theory \cite{hofbauer2003evolutionary} \cite{hofbauer1998evolutionary}, including the replicator, best-reply, logit, and projection dynamics. An incentive can be thought of as mediating the relationship between the population state and the fitness landscape. The authors explored the stability theory of the these dynamics and various generalizations in \cite{harper2012stability} \cite{fryer2012kullback} (see also \cite{harper2011escort}). In these studies, a simple relationship emerged for the replicator equation and its incentive as a means of bringing incentives to finite population dynamics. 

We investigate primarily two one-parameter families of incentives which we call the $q$-replicator and $q$-Fermi incentives. These families include four of the more common incentives in use: the projection, replicator, logit, and Fermi incentives. Recently human population growth in Spain has been shown to follow patterns of exponential growth with scale-factors $q \neq 1$ \cite{hernando2013workings}, and we consider the implications of such alternative values of $q$ in finite population dynamics. In essence, the incentive process uses \emph{incentive-proportionate} selection rather than simply $\emph{fitness-proportionate}$ selection as the Moran process (and replicator incentive) use as a basis of capturing natural selection. As we will see, the fitness-proportionate selection case has several special properties, but is by no means the only incentive suitable to model evolutionary processes. For example, the ratio of fixation probabilities satisfies the same relation as those of the Moran process, and so results regarding fitness flux for the Moran and Wright-Fisher processes apply to the $q$-replicator family as well \cite{sella2005application} \cite{mustonen2010fitness}.

The family of processes captured by incentives includes those studied by many authors, e.g. the Fermi process \cite{traulsen2009stochastic}. We note that this family captures a similar set of processes as Sandholm's microfoundations approach using revisions protocols \cite{sandholm2005excess} \cite{sandholm2010stochastic} \cite{sandholm2012stochastic}. The behavior of the continuous incentive dynamics on the interior of the probability simplex are qualitatively similar to the replicator equation, at least locally \cite{harper2012stability}; for the incentive process we find that the choice of incentive can exert a significant effect on the classical quantities associated to the Moran process, such as the fixation probabilities.

\section{Incentive Moran Process}

The incentive process was briefly introduced in the appendix of \cite{harper2013inherent}, where it was shown that the process has a particular bound on entropy rate. Let a population be composed of two types $A$ and $B$ of size $N$ with $i$ individuals of type $A$ and $N-i$ individuals of type $B$. The incentive process is a Markov process on the population states defined by the following transition probabilities, corresponding to a birth-death process where birth is incentive-proportionate and death is uniformly random:

\begin{align}\label{incentive_process}
T_{i \to i+1} &= \frac{\incentive_A(i)}{\incentive_A(i) + \incentive_B(i)} \frac{N-i}{N} \notag \\
T_{i \to i-1} &= \frac{\incentive_B(i)}{\incentive_A(i) + \incentive_B(i)} \frac{i}{N}, \\
T_{i \to i} &= 1 - T_{i \to i+1} - T_{i \to i-1}, \notag
\end{align}
The incentives $\incentive_A(i) = i f_A(i)$ and $\incentive_B(i) = (N-i) f_B(i)$ give the Moran process, where a traditional choice for the fitness landscape is:
\begin{align*}
f_A(i) &= \frac{a(i-1) + b(N-i)}{N-1} \\
f_B(i) &= \frac{ci + d(N-i-1)}{N-1}
\end{align*}
for a game matrix defined by
\[ A = \left( \begin{matrix}
 a & b\\
 c & d
\end{matrix} \right). \]

Table \ref{incentives_table} lists incentive functions for many common dynamics. Though we primarily investigate two one-parameter families of incentives defined in terms of a fitness landscape, we note that the incentive need not depend on the fitness landscape or the population state. The Fermi process of Traulsen et al is the $q$-Fermi for $q=1$ \cite{traulsen2009stochastic}, and for $q=0$ is called as the logit incentive. The $q$-replicator incentive has previously been studied in the context of evolutionary game theory \cite{harper2011escort} \cite{harper2012stability} and derives from statistical-thermodynamic and information-theoretic quantities \cite{tsallis1988possible}. Other than the best-reply incentive, which we will consider primarily as a limiting case of the $q$-Fermi, we will assume that all incentives are positive definite $\incentive(k) > 0$ if $0 < k < N$ to avoid the restatement of trivial hypotheses in the results that follow. For the $q$-Fermi incentives, positivity is of course guaranteed for all landscapes. This also implies that the fixation probability satisfies $0 < \fixation < 1$ for any such incentive.

\begin{figure}[h]
    \centering
    \begin{tabular}{|c|c|}
        \hline
        Dynamics & Incentive \\ \hline
        Projection & $\incentive_T(i) = f_T(i)$\\ \hline
        Replicator & $\incentive_T(i) = x_T f_T(i)$\\ \hline
        $q$-Replicator & $\incentive_T(i) = x_T^{q} f_T(i)$\\ \hline
        Logit & $\displaystyle{ \incentive_T(i) = \frac{\text{exp}(\beta f_T(x))}{\sum_{j \in \{A, B\}}{\text{exp}(\beta f_j(x))}}}$ \\ \hline
        Fermi & $\displaystyle{ \incentive_T(i) = \frac{x_T \text{exp}(\beta f_T(x))}{\sum_{j \in \{A, B\}}{x_j \text{exp}(\beta f_j(x))}}}$ \\ \hline
        $q$-Fermi & $\displaystyle{ \incentive_T(i) = \frac{x_T^{q}\text{exp}(\beta f_T(x))}{\sum_{j \in \{A, B\}}{x_T^{q}\text{exp}(\beta f_j(x))}}}$ \\ \hline
        Best Reply & $\incentive_T(i) = x_T BR_T(i)$ \\ \hline
    \end{tabular}
    \caption{Incentives for some common dynamics, where type $T \in \{ A, B \}$, $x_A = i$, $x_B = N-i$, and $BR_T(i)$ is the best reply to state $x = (i, N-i)$ and is valued in $\{0,1\}$. The projection incentive \cite{sandholm2008projection} is simply the $q$-replicator with $q=0$. Another way of looking at the projection incentive on the Moran landscape is simply as a \emph{constant incentive}, since the incentive itself is a constant function in $i$. The logit incentive is the $q$-Fermi with $q=0$. For more examples see Table 1 in \cite{fryer2012existence}.}
    \label{incentives_table}
\end{figure}

\section{Fixation Probabilities}

For the Moran process the probability of fixation $\fixation_i$ of type A when the population starts in state $(i, N-i)$ is:
\begin{equation}
 \fixation_i = \frac{s_{0, i-1}}{s_{0, N-1}} \qquad \text{where} \qquad s_{i, j} = \sum_{k=i}^{j}{t_k} \qquad \text{and} \qquad t_j = \prod_{k=1}^{j}{\frac{f_B(k)}{f_A(k)}} 
\label{moran_fixation}
\end{equation}
\noindent where we use the notation to that of \cite{antal2006fixation} (see also \cite{taylor2004evolutionary}). We transform this for the incentive process simply by substituting $\incentive_A(i) = i f_A$ and $\incentive_B(i) = (N-i) f_B(i)$, giving
\begin{align} \label{incentive_fixation}
t_j = \prod_{k=1}^{j}{\frac{k\incentive_B(k)}{(N-k) \incentive_A(k)}} &= \left(\prod_{k=1}^{j}{\frac{\incentive_B(k)}{\incentive_A(k)}}\right) \left(\prod_{k=1}^{j}{\frac{k}{N-k}} \right) \notag \\
&= \left(\prod_{k=1}^{j}{\frac{\incentive_B(k)}{\incentive_A(k)}}\right) {\binom{N-1}{j}}^{-1}
\end{align}

Note that this formula can also be derived directly in terms of the transition probabilities (\ref{incentive_process}). It is well-known that the fixation probabilities for the Moran process are
\[ \rho_i = \frac{1 - r^{-i}}{1 - r^{-N}}, \] 
when the fitness landscape is given by the game matrix $a=b=r$ and $c=d=1$, which we refer to as the \emph{Moran landscape}. We are mostly concerned with the fixation probability when the starting state is $i=1$ and will denote it simply by $\fixation_A$ or simply $\fixation$ when unambiguous; similarly $\fixation_B$ will denote the probability of a single $B$-individual fixating, which is given by $\fixation_B = 1 - \fixation_{N-1} = t_{N-1}/s_{0, N-1}$. We then have that
\begin{equation} \label{fixation_ratio}
 \fixation_B / \fixation_A = t_{N-1} = \prod_{k=1}^{N-1}{\frac{k\incentive_B(k)}{(N-k) \incentive_A(k)}} 
\end{equation}
where for the Moran process we simply have
\[ \fixation_B / \fixation_A = \prod_{k=1}^{N-1}{\frac{f_B(k)}{f_A(k)}}, \]
as in \cite{taylor2004evolutionary} and \cite{nowak2006evolutionary}.

\subsection{Explicit Closed-Forms for Fixation Probabilities}

In \cite{harper2012stability} the authors consider an incentive called the $q$-replicator incentive. The projection incentive (corresponding to the projection dynamic \cite{sandholm2008projection}) is the special case where $q=0$ and the replicator incentive when $q=1$. Although Equation \ref{incentive_fixation} determines the fixation probabilities for this incentive, a pleasing closed form does not readily emerge for arbitrary $q$. Nevertheless the fixation probabilities can sometimes be expressed in terms of hypergeometric coefficients (though in some cases no closed-form can be given \cite{calkin1998factors}). For $q=2$, the closed form is
\[\rho_{q=2}(r) = \left(1 + \frac{1}{r} \right)^{1-N}\]
An interesting special case is that of the neutral landscape with $r=1$, though closed-form expressions are still hard to come by other than for $q=1,2,3$. The $q$-replicator incentive shares some of the intuitive properties of the classical case $q=1$. For instance, the fixation probabilities of each of the two types are equal for the neutral landscape for all $q$, which can easily be deduced from Equation \ref{fixation_ratio}. Many properties, however, are dependent on the value of the parameter. For $q=0$, a straightforward calculation shows that
\[ \fixation_{q=0}(N) = \frac{1}{\sum_{k=1}^{N-1}{\binom{N-1}{k}^{-1}}} = \frac{2^N}{N} \frac{1}{\sum_{k=1}^{N}{\frac{2^k}{k}}}, \]
where the last equality uses an identity from \cite{sprugnoli2006sums}. From this formula we can deduce that $1/N < \fixation_{q= 0}(N) < 1/2$ for $N>2$. Perhaps surprisingly, $\lim_{N\to \infty}{\fixation_1(N)} \to 0$ only holds for $q \geq 1$; for $q=0$, the limit is $1/2$. The $2$-replicator incentive has fixation probability $\fixation_{q=2} = 2^{1-N}$ and the $3$-replicator incentive has fixation probability $\fixation_{q=3} = \binom{2N-2}{N-1}^{-1}$. Compare to $\fixation_{q=1} = 1/N$ for the $1$-replicator incentive. We have that $\fixation_{q=3}(N) < 2^{1-N} = \fixation_{q=2}(N) < 1/N = \fixation_{q=1}(N) < \fixation_{q=0}(N)$ for $N > 2$ (they are all equal to $1/2$ when $N=2$). See Figure \ref{figure_neutral_fixations}. 

%

\begin{figure}[h]
        \begin{subfigure}[b]{0.5\textwidth}
            \centering
            \includegraphics[width=\textwidth]{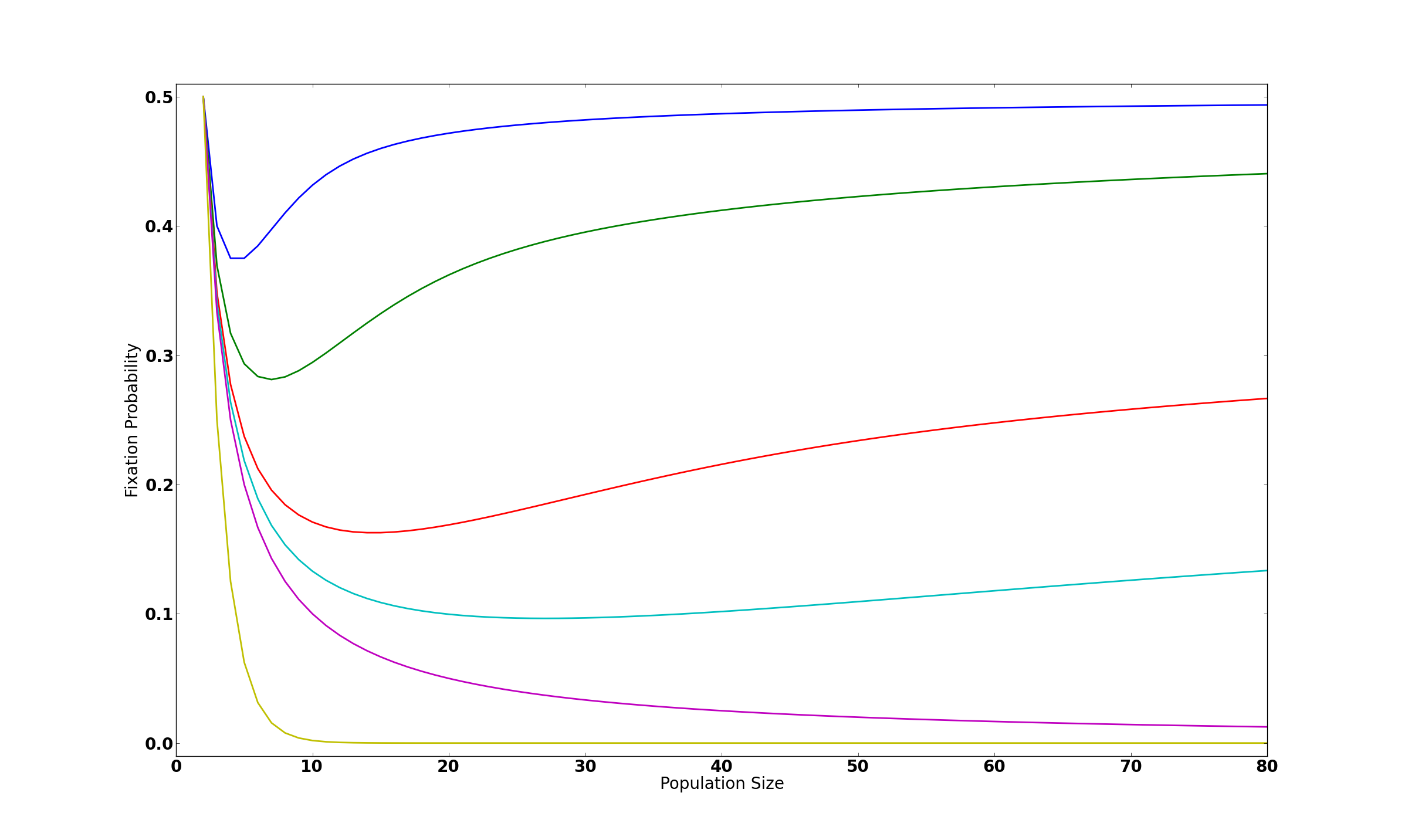}
        \end{subfigure}%
        ~ 
        \begin{subfigure}[b]{0.5\textwidth}
            \centering
            \includegraphics[width=\textwidth]{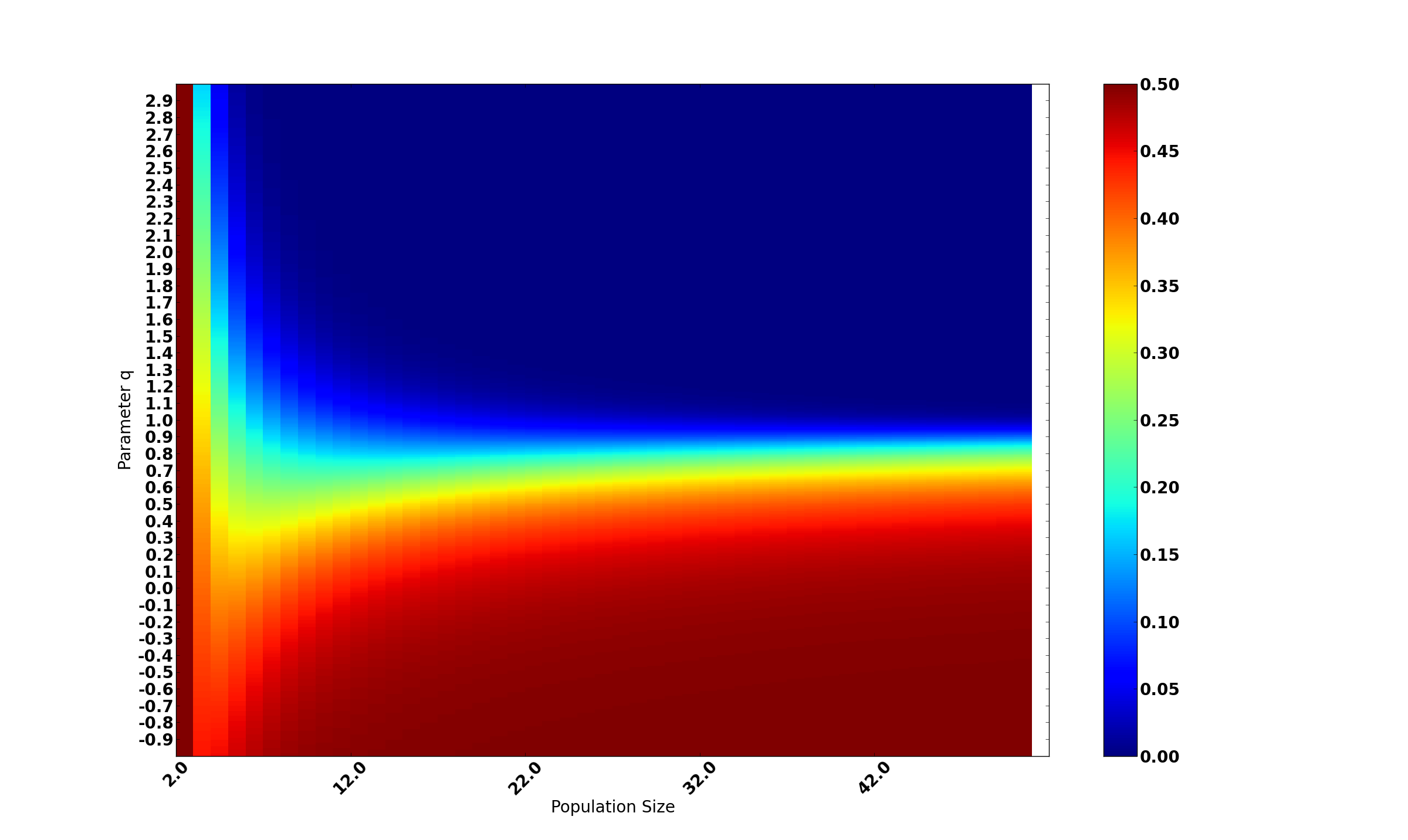}
        \end{subfigure}\\
        \caption{Left: Fixation probabilities for the neutral fitness landscape for the $q$-replicator incentive as a function of population size. Top to bottom: $q=0, 0.5, 0.8, 0.9, 1, 2$. Right: Heatmap of fixation probabilities for the neutral fitness landscape for the $q$-replicator incentive as a function of population size. The parameter $q$ is on the vertical axis; population size $N$ on the horizontal.}
        \label{figure_neutral_fixations}
\end{figure} 

Any closed form for the $q$-replicator gives a closed form for the $q$-Fermi for the same $q$ with the substitution $r \mapsto e^{\beta (r-1)}$, so the discussion above also gives a closed-form for the $2$-Fermi. Note also that the fixation probability for the best-reply dynamic is a step function jumping from zero to one at $r=1$, and is obtainable as a limit of the $q$-Fermi as $\beta \to \infty$.

\subsection{Monotonicity of Fixation Probabilities}

We have seen a particular case of the departure of behavior between deterministically defined incentive dynamics and the incentive process. The continuous incentive dynamic behaves qualitatively similarly to the replicator dynamic with respect to fixation for the Moran landscape. That is, if $r>1$, fixation is guaranteed for \emph{any incentive} listed in Table \ref{incentives_table} when the fitness landscape is given by the Moran landscape with matrix $A_r$. For the incentive process, the fixation probabilities are dependent on the incentive and its parameters, so the \emph{long-run} behaviors are qualitatively different. More precisely, we have the following.

\begin{theorem}
Let $\incentive$ be an incentive, let $N > 2$, and define a new incentive $\psi_{A,q}(i) = i^q \incentive_A(i)$ and $\psi_{B,q}(i) = (N-i)^q \incentive_B(i)$. Then for $p < q$, $\fixation_{A, p} > \fixation_{A, q}$.
\label{theorem_fixation_decreasing}
\end{theorem}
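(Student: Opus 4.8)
The plan is to reduce everything to the explicit product formula (\ref{incentive_fixation}) and to exploit that the map $q \mapsto \psi_{\cdot,q}$ multiplies each $t_j$ by a power of a binomial coefficient. Writing the fixation probability from a single $A$-individual as $\fixation_A = \fixation_1 = s_{0,0}/s_{0,N-1} = 1/\sum_{j=0}^{N-1} t_j$ (using $t_0 = 1$), it suffices to prove that $S(q) := \sum_{j=0}^{N-1} t_j^{(q)}$ is strictly increasing in $q$, where $t_j^{(q)}$ denotes the quantity in (\ref{incentive_fixation}) formed from the incentive $\psi_{\cdot,q}$.

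First I would compute $t_j^{(q)}$ directly. Since $\psi_{B,q}(k)/\psi_{A,q}(k) = (N-k)^q k^{-q}\,\incentive_B(k)/\incentive_A(k)$, the extra factors telescope through the same product $\prod_{k=1}^j k/(N-k) = \binom{N-1}{j}^{-1}$ that is used to pass from the first to the second line of (\ref{incentive_fixation}). This yields $t_j^{(q)} = t_j \binom{N-1}{j}^{q}$, where $t_j = \big(\prod_{k=1}^j \incentive_B(k)/\incentive_A(k)\big)\binom{N-1}{j}^{-1}$ is the $t_j$ of the original incentive $\incentive$, i.e.\ the case $q=0$.

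Next I would examine $S(q) = \sum_{j=0}^{N-1} t_j \binom{N-1}{j}^q$ term by term. The endpoint terms $j=0$ and $j=N-1$ have $\binom{N-1}{j} = 1$ and so are independent of $q$. For every index $j$ with $1 \le j \le N-2$ we have $\binom{N-1}{j} \ge N-1 \ge 2 > 1$ because $N > 2$, and $t_j > 0$ because $\incentive$ is positive definite on $0 < k < N$; hence $q \mapsto t_j \binom{N-1}{j}^q$ is strictly increasing. Since $N > 2$ guarantees the range $1 \le j \le N-2$ is nonempty, $S$ is a sum of $q$-constant terms and at least one strictly increasing term, hence strictly increasing. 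Therefore $p < q$ gives $S(p) < S(q)$, and $\fixation_{A,p} = 1/S(p) > 1/S(q) = \fixation_{A,q}$.

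The whole argument is a one-line monotonicity observation once the reduction $t_j^{(q)} = t_j \binom{N-1}{j}^q$ is established, so the only real content is that identity — routine bookkeeping with (\ref{incentive_fixation}) — together with the two facts that make the middle terms genuinely vary with $q$: positivity of the original $t_j$, from positive-definiteness of $\incentive$, and $\binom{N-1}{j} > 1$, from $N > 2$. I do not expect a serious obstacle; the point requiring care is to confirm that the endpoint terms are exactly $q$-independent so that strictness is not lost, and to note that the hypothesis $N > 2$ is precisely what makes the middle index range nonempty (for $N=2$ all these fixation probabilities collapse to $1/2$, consistent with the discussion preceding the theorem).
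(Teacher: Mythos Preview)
Your proof is correct and follows essentially the same route as the paper: both factor $t_j^{(q)}$ as a $q$-independent quantity times a power of $\binom{N-1}{j}$, then use $\binom{N-1}{j}>1$ for $1\le j\le N-2$ (which requires $N>2$) to conclude that each interior summand of $S(q)=\sum_j t_j^{(q)}$ is strictly increasing in $q$, whence $\fixation_{A,q}=1/S(q)$ is strictly decreasing. Your write-up is slightly more careful than the paper's in explicitly separating the $q$-independent endpoint terms $j=0$ and $j=N-1$ from the strictly increasing interior terms, which cleanly secures the strict inequality.
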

\begin{proof}
First note that
\[ \prod_{k=1}^{j}{\frac{N-k}{k}} = \binom{N-1}{j} > 1\] if $j < N-1$ and $N>2$. Then
\begin{align*}
\prod_{k=1}^{j}{\frac{(N-k)^{p-1}\incentive_B(k)}{k^{p-1} \incentive_A(k)}} &= \prod_{k=1}^{j}{\frac{\incentive_B(k)}{\incentive_A(k)}} \left(\prod_{k=1}^{j}{\frac{N-k}{k}}\right)^{p-1} \\
&< \prod_{k=1}^{j}{\frac{\incentive_B(k)}{\incentive_A(k)}} \left(\prod_{k=1}^{j}{\frac{N-k}{k}}\right)^{q-1} \\
&= \prod_{k=1}^{j}{\frac{(N-k)^{q-1}\incentive_B(k)}{k^{q-1} \incentive_A(k)}}
\end{align*}
The inequality is then reversed in the equation for the fixation probability (\ref{incentive_fixation}).
\end{proof}

There are immediate consequences of Theorem \ref{theorem_fixation_decreasing}. In particular, the fixation probability of type $A$ for the projection process is greater than that of the replicator incentive, which is greater than that of the $2$-replicator incentive, and so on (including the special case discussed before the theorem). For the Moran landscape, since there are known closed-forms for $q=1$ and $q=2$, there are explicit upper and/or lower bounds for various $q<1$, $1<q<2$, and $q>2$. The same holds for the $q$-Fermi incentive.

A somewhat similar result holds for variation in $\beta$ for the $q$-Fermi. Clearly Theorem \ref{theorem_fixation_decreasing} applies to the $q$-Fermi process for fixed $\beta$. For variable $\beta$, fixed $q$, and the Moran landscape, the analogous theorem for variation in $\beta$ depends on the value of $r$. Since the effective relative fitness is $e^{\beta (r-1)}$, the fixation probability increases or decreases in $\beta$ depending on whether $r-1 > 0$ or $r-1 < 0$. See Figure \ref{figure_q_logit_fixations}. 

\begin{figure}
    \centering
    \includegraphics[width=0.6\textwidth]{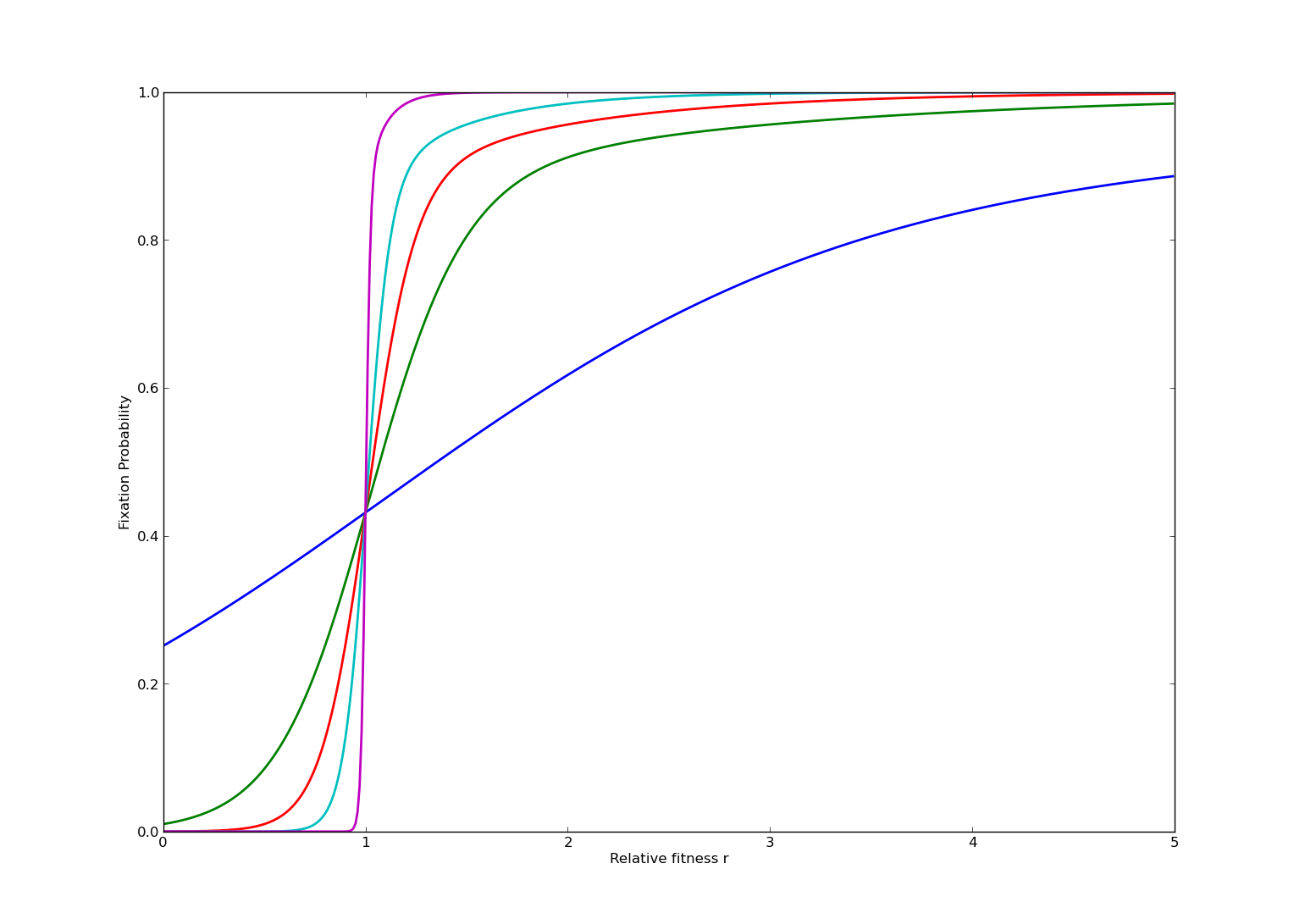}
    \caption{Fixation Probabilities for the Moran landscape for the Logit incentive versus relative fitness $r$, population size $N=10$. From the intersection point outward: $\beta=0.1, 0.5, 1, 2, 10$. The fixations approach that of the best-reply incentive as $\beta \to \infty$ (a step function).}
    \label{figure_q_logit_fixations}
\end{figure}

\subsection{Ratios of Fixation Probabilities}

Our next theorem shows that some properties of the fixation probabilities are independent of $q$. 

\begin{theorem}\label{theorem_fixation_ratios}
Let $\incentive$ be an incentive and define a new incentive $\psi_{A,q}(i) = i^q \incentive_A(i)$ and $\psi_{B,q}(i) = (N-i)^q \incentive_B(i)$. Then the ratio of fixation probabilities $\fixation_B / \fixation_A$ is independent of $q$.
\end{theorem}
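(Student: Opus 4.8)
The plan is to compute the ratio directly from Equation~(\ref{fixation_ratio}), which already expresses $\fixation_B/\fixation_A$ for any incentive as the single product $t_{N-1}$. Applying this to the modified incentive $\psi_q$ gives
\[ \frac{\fixation_{B,q}}{\fixation_{A,q}} = \prod_{k=1}^{N-1}\frac{k\,\psi_{B,q}(k)}{(N-k)\,\psi_{A,q}(k)} = \prod_{k=1}^{N-1}\frac{k\,(N-k)^{q}\,\incentive_B(k)}{(N-k)\,k^{q}\,\incentive_A(k)}, \]
so the entire task is to show the right-hand side does not depend on $q$.

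First I would isolate the $q$-dependent part: each factor equals $\left(\frac{N-k}{k}\right)^{q-1}\frac{\incentive_B(k)}{\incentive_A(k)}$, so the product splits as
\[ \frac{\fixation_{B,q}}{\fixation_{A,q}} = \left(\prod_{k=1}^{N-1}\frac{N-k}{k}\right)^{q-1}\prod_{k=1}^{N-1}\frac{\incentive_B(k)}{\incentive_A(k)}. \]
The key step is then to evaluate the symmetric product $\prod_{k=1}^{N-1}\frac{N-k}{k}$. Using the identity from the proof of Theorem~\ref{theorem_fixation_decreasing}, namely $\prod_{k=1}^{j}\frac{N-k}{k} = \binom{N-1}{j}$, at $j = N-1$ this is $\binom{N-1}{N-1} = 1$ (equivalently, numerator and denominator are each $(N-1)!$). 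Hence the $q$-dependent factor is $1^{q-1} = 1$, and $\fixation_{B,q}/\fixation_{A,q} = \prod_{k=1}^{N-1}\incentive_B(k)/\incentive_A(k)$, which is manifestly independent of $q$.

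There is essentially no obstacle here; the only point worth noticing is the symmetry of the complete product over $k = 1,\dots,N-1$ that makes the extra factor collapse to $1$. This same cancellation fails for the partial products $s_{0,j}$ with $j < N-1$, which is exactly why Theorem~\ref{theorem_fixation_decreasing}, about the individual fixation probability $\fixation_A$ rather than the ratio, is nontrivial. I would also remark that this recovers the classical Moran ratio $\fixation_B/\fixation_A = \prod_{k=1}^{N-1} f_B(k)/f_A(k)$ when $\incentive$ is the replicator incentive, and that it substantiates the abstract's assertion that the fixation probabilities change with $q$ while their ratio remains constant.
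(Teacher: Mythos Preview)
Your proof is correct and follows essentially the same argument as the paper: both compute $t_{N-1}$ for the incentive $\psi_q$, factor out the $q$-dependent part $\left(\prod_{k=1}^{N-1}(N-k)/k\right)^{q-1}$, and observe that this equals $\binom{N-1}{N-1}^{q-1}=1$. Your write-up is somewhat more explicit than the paper's very terse version, but the idea and the key cancellation are identical.
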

\begin{proof}
From equations \ref{incentive_fixation} and \ref{fixation_ratio},
\[
 \left(\prod_{k=1}^{N-1}{\frac{\incentive_B(k)}{\incentive_A(k)}}\right) {\binom{N-1}{N-1}}^{-1} = t_{N-1} = \left(\prod_{k=1}^{N-1}{\frac{\incentive_B(k)}{\incentive_A(k)}}\right) {\binom{N-1}{N-1}}^{q-1}
\]
So the ratio of fixation probabilities is the same for $\psi$ and $\incentive$.
\end{proof}

For the $q$-replicator and $q$-Fermi incentives, the ratio of fixation probabilities is constant in $q$, even though the fixation probabilities themselves are not the same (as we have already seen for the neutral landscape, and as implied by Theorem \ref{theorem_fixation_decreasing}). Moreover the ratio of fixation probabilities for these two families of incentives are the same as for the Moran process!

Together the two theorems of this section show that even though the fixation probabilities vary with $q$, their ratio does not; Theorem \ref{theorem_fixation_ratios} extends Theorem \ref{theorem_fixation_decreasing} to include $\fixation_B$ (though a direct calculation could have done the same). Theorem \ref{theorem_fixation_ratios} allows us to use eariler calculations for $\fixation_A$ to easily calculate $\fixation_B$, e.g., for $2$-replicator, that $\fixation_B = (1+r)^{1-N}$, since $\fixation_B/\fixation_A = r^{1-N}$ for the Moran process \cite{nowak2006evolutionary}. (There is a $r \mapsto 1/r$ symmetry in fixation probabilities, as for the Moran process.) For the $q$-Fermi and the Moran landscape, the ratio of fixation probabilities is
\begin{equation} \label{fermi_ratio}
\log \frac{\fixation_A}{\fixation_B} = -\sum_{k=1}^{N-1}{\beta \left( f_B(k) - f_A(k) \right)} = \beta (r-1) (N-1),
\end{equation}
giving a family-wide relation between the population size, the strength of selection, and the relative fitness. From this we can see that for large $\beta$, fixation favors the better reply (with sign dictated by $r-1$), as expected. If $r>1$, larger populations favor fixation of type $A$, since the probability that $A$ is randomly chosen for replacement for small populations is diminished; the strength of selection parameter $\beta$ affects the relative fixation probabilities in the same manner, amplifying the difference in relative fitness, and possibly tempering the effects of the population size (small or large) depending on the values of $N$ and $\beta$ (Equation \ref{fermi_ratio}). 

We also have from Theorem \ref{theorem_fixation_ratios} that for large population sizes the fixation probabilities of the two types converge to zero and one, for all $q$ for the $q$-replicator, for $r\neq 1$. This is simply because $\frac{\fixation_B}{\fixation_A} = r^{1-N}$ (see \cite{nowak2006evolutionary} for $q=1$), and since this ratio is constant for all $q$. For a large population, either $\frac{\fixation_B}{\fixation_A}$ or its reciprocal approaches 0, depending on if $r>1$ or $r<1$. This is not true for the neutral landscape $r=1$, where the two fixation probabilities are equal for all $N$. Hence despite the affect the parameter $q$ can have on fixation probabilities, some properties of the Moran process are unchanged. Note though that for the $q$-Fermi incentive this is also true -- one of the two fixation probabilities goes to zero as either $N$ or $\beta$ gets large, or they are equal if $r=1$.

Theorem \ref{theorem_fixation_ratios} also implies that the boundary elements of the stationary probabilities of the $q$-replicator incentive process are independent of $q$, as mutation rates go to zero. This is because the stationary probability $\pi(0)$ of state $(0,N)$ is $\fixation_A / (\fixation_A + \fixation_B) = 1 / (1 + \fixation_B/\fixation_A)$, which is independent of $q$ by the theorem. Indeed in this case the stationary distributions are identical, but direct calculations show that for nonzero mutation rates the stationary distributions are not equal. There are further implications for the entropy rate of the processes and other quantities related to the stationary distribution, but is beyond the scope of this investigation. (See \cite{fudenberg2004stochastic} and \cite{harper2013inherent}.)

\subsection{Fitness Flux and Fisher's Fundamental Theorem}

There is another important consequence of Theorem \ref{theorem_fixation_ratios}. In \cite{mustonen2010fitness}, Mustonen and L\"{a}ssig introduce the \emph{fitness flux} theorem as an application of Crook's fluctuation theorem \cite{crooks1999entropy}, and show that there is a version of Fisher's Fundamental theorem of Natural Selection \cite{fisher1999genetical} for the Moran process and the Wright-Fisher process \cite{imhof2006evolutionary}. In our context, this amounts to essentially ``compressing'' the processes to a new process with just two states -- All Type A and All Type B -- while introducing a mutation from the fixated states to the states where the population has single A or B mutant (i.e. in the manner of \cite{claussen2005non}). More precisely, we set $T_{0 \to 1} = \mu = T_{N \to N-1}$, and set $T_{1 \to N} = \fixation_A$ and $T_{N-1 \to 0} = \fixation_B$. Then we further reduce to a two state process where $T_{\text{All A} \to \text{All B}} = \mu \fixation_A$ and $T_{\text{All B} \to \text{All A}} = \mu \fixation_B$. Then the analogous ratio to Equation \ref{fixation_ratio} is simply $\fixation_B / \fixation_A$, and is equal to $r^{1-N}$ for the Moran process.

Since Theorem \ref{theorem_fixation_ratios} shows that the ratio of fixation probabilities for the $q$-replicator incentive process are the same as those of the Moran process, the above discussion and derivations in \cite{mustonen2010fitness} also apply to the incentive process. So these incentives carry a natural version of Fisher's fundamental theorem, further justifying their use as models of selective processes. For a readable summary of the associated quantities and the relationship to the second law of thermodynamics, see \cite{smerlak}.

We note that this is remarkably similar to the state of affairs for deterministic dynamics in which the relative entropy yields a Lyapunov function and there is also an associated version of Fisher's Fundamental theorem \cite{harper2012stability} \cite{harper2011escort}.

\section{Incentive Stable States}

The incentive dynamic was defined in part to investigate the stability of incentive dynamics. In this section we leave the discussion of fixation probabilities and consider the appropriate notions of evolutionary equilibria. The natural generalization of an evolutionarily stable state to a process driven by an incentive is an \emph{incentive stable state} (ISS) \cite{fryer2012existence}. For the continuous incentive dynamic on the simplex, the ISS condition is simply \[\sum_i{\frac{\ess{x}_i - x_i}{x_i} \incentive_i(x)} > 0,\]
which gives the familiar evolutionarily stable state condition (ESS) $\hat{x} \cdot f(x) > x \cdot f(x)$ for the replicator incentive $\incentive_i(x) = x_i f_i(x)$. Fryer showed that the ISS condition for the best reply dynamic is again ESS, and in \cite{harper2012stability}, for 2x2 linear games, it was shown that an ISS for the projection dynamic is again an ESS, that is satisfies the equation $x_1 = (d-b) / (a-c + d-b)$, which is expected from the known Lyapunov theorems \cite{harper2012stability} \cite{sandholm2008projection}. This is not always the case, however -- an ISS can fail to be an ESS.

An analog of ESS exists for the Moran process \cite{nowak2006evolutionary} by setting equal the transition probabilities and solving for the state $i$, which yields
\begin{equation}\label{ess_finite}
 \frac{i}{N} = \frac{d-b + \frac{a-d}{N}}{a-c+d-b} 
\end{equation}
Note that this definition lacks a stability criterion, and often additional criteria are imposed, such as $\text{ESS}_N$ \cite{nowak2006evolutionary}. For the incentive process, we can find the ISS analogously by solving $(N-i) \incentive_A(i) = i \incentive_B(i)$, which we call \emph{ISS candidates} to indicate that these states have not yet been identified as stable. It is easy to see that this relation can lead to more interesting interior dynamics, since there can be multiple interior ISS candidates while there can be at most one ESS for the Moran process with linear fitness determined by a game matrix. Let us consider the $q$-replicator incentive for $q=0,1,2$. The case $q=1$ is Equation \ref{ess_finite} above. For $q=0$ and $q=2$, the ISS candidate condition gives curiously similar quadratic equations in $i/N$:
\begin{align*}
\left(a+c-b-d\right) \left(\frac{i}{N}\right)^2 &+ \left(d+2b-a-\frac{a+d}{N}\right)\frac{i}{N} + \left(\frac{a}{N} - b\right) = 0 \\
\left(a+c-b-d\right) \left(\frac{i}{N}\right)^2 &+ \left(b+2d-c -\frac{a+d}{N}\right)\frac{i}{N} + \left(\frac{d}{N} - d\right ) = 0,
\end{align*}
which of course in general need not have the same solutions for particular $a,b,c,$ and $d$. See Figure \ref{figure_q_rep_iss}. It is easy to see that if $N$ is even and $f_A(N/2) = f_B(N/2)$ then this ISS candidate equation is satisfied for all $q$ at $i = N/2$. For instance, a Hawk-Dove game matrix $a=1=d$, $b=2=c$, which has an internal ESS for the replicator dynamic at $(N/2, N/2)$, is an ISS candidate for any $q$. 

\begin{figure}
    \centering
    \includegraphics[width=0.65 \textwidth]{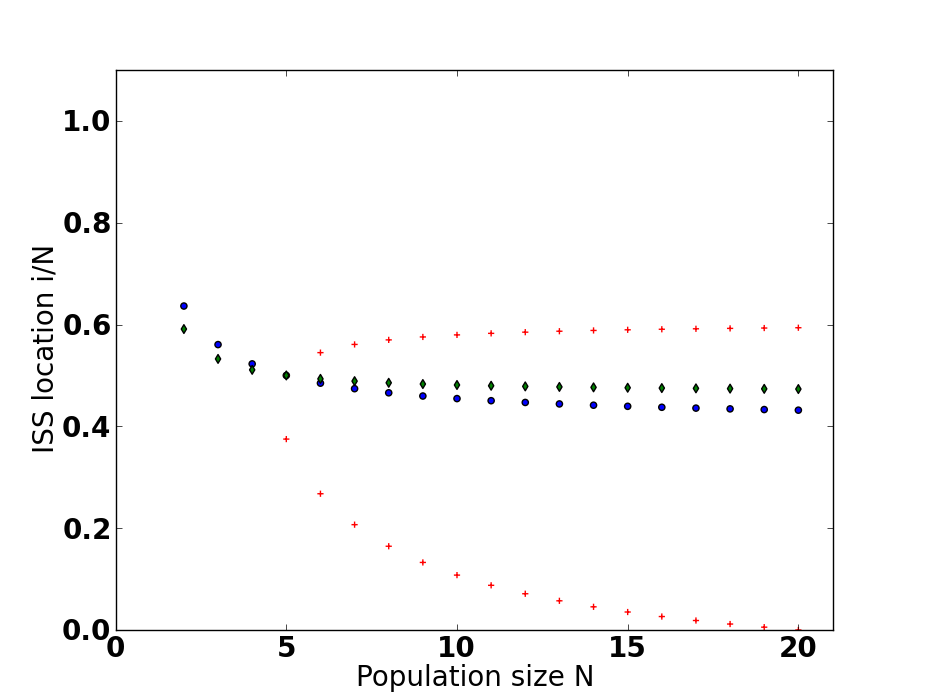}
    \caption{ISS candidates $i/N$ for $q$-replicator incentive. Blue dots: $q=1$, red $+$: $q=0$, green diamonds: $q=2$. Notice that for $q=0$, depending on $N$, there may be no ISS or two ISS candidates in contrast to the Moran process ($q=1$) for which there is a single interior ISS. The game matrix for all three incentives is given by $a=20,b=1,c=7,d=10$.}
    \label{figure_q_rep_iss}
\end{figure}

For the Moran process and landscape, there is no interior ISS candidate (no ESS) for the replicator incentive if $r \neq 1$; there is such an interior point for the $q$-replicator for all other values of $q$, including the projection incentive. This point is given by: 
\begin{equation}\label{r_iss}
\frac{i}{N} = \frac{1}{1+r^{\frac{1}{q-1}}} < 1. 
\end{equation}
Similarly for the $q$-Fermi. See Figure \ref{figure_r_iss}. For continuous dynamics, these points are stable for $q < 1$; for $q > 1$, the stability depends on the starting state of the dynamic \cite{harper2012stability}.

\begin{figure}[h]
        \begin{subfigure}[b]{0.5\textwidth}
            \centering
            \includegraphics[width=\textwidth]{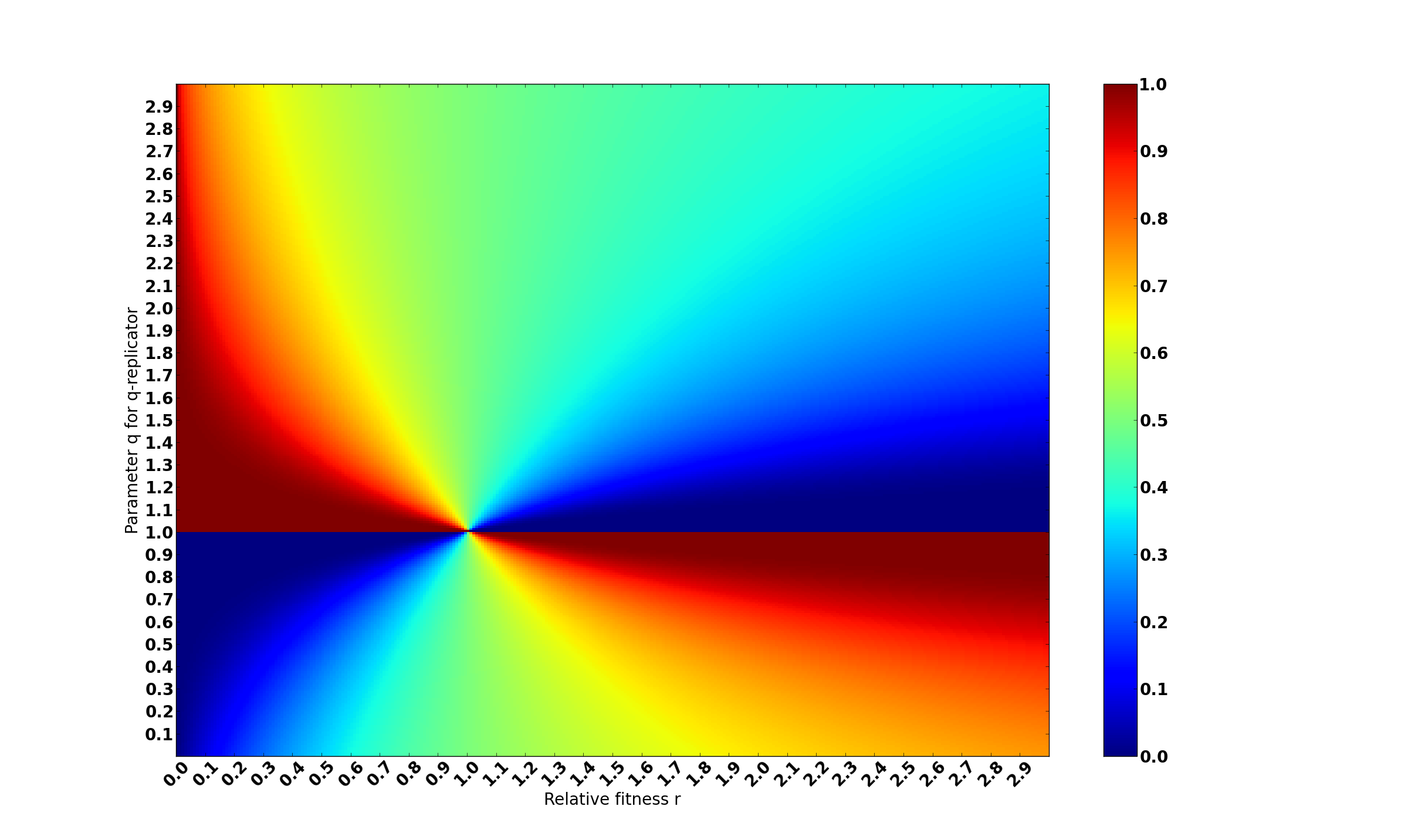}
        \end{subfigure}%
        ~ 
        \begin{subfigure}[b]{0.5\textwidth}
            \centering
            \includegraphics[width=\textwidth]{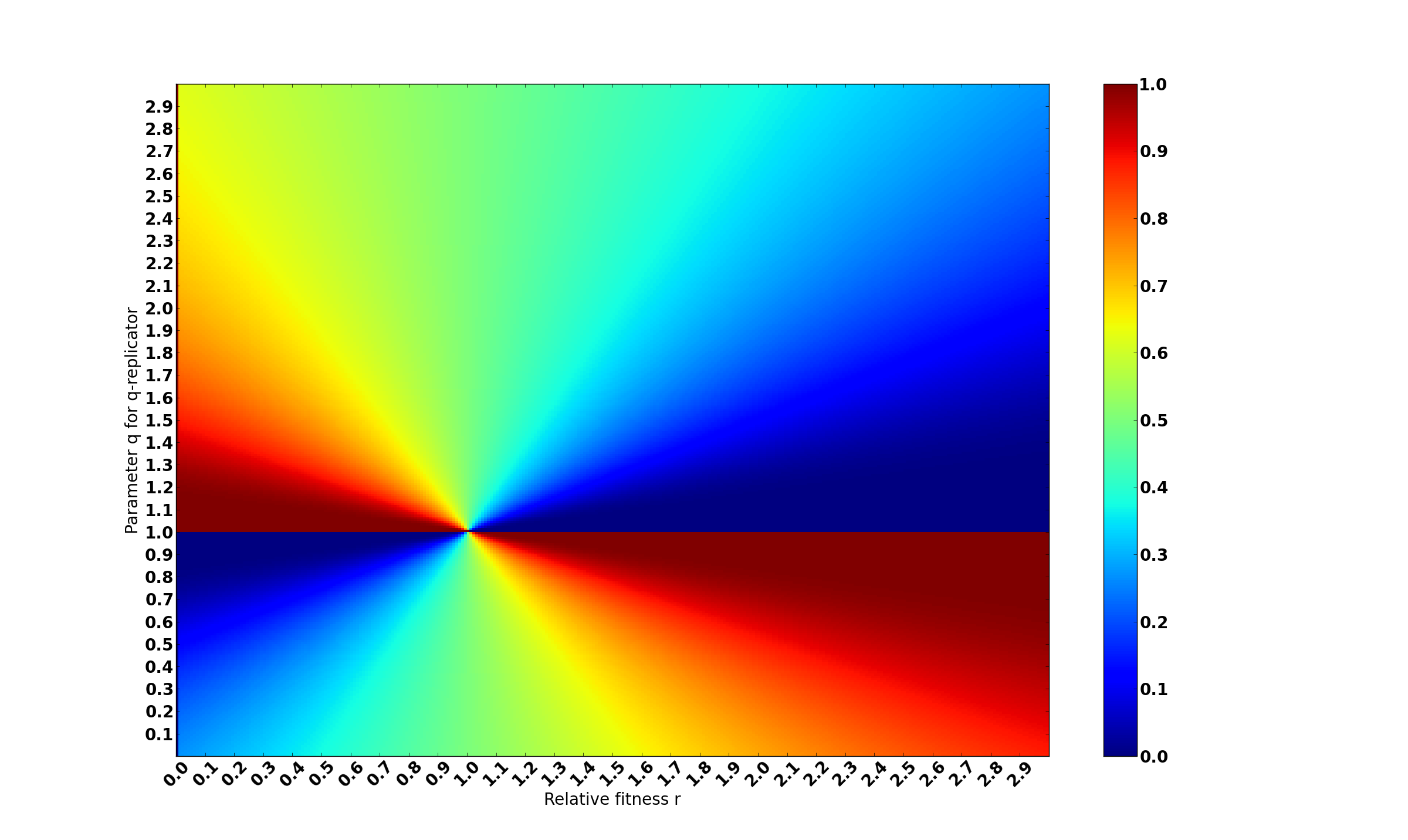}
        \end{subfigure}\\
        \caption{Left: ISS candidates $i/N$ for $q$-replicator incentive (Equation \ref{r_iss}). Right: Same for $q$-Fermi incentive with $\beta=1$. Notice that the lines of constancy are straightened by the exponential.}
    \label{figure_r_iss}
\end{figure} 

\section{Discussion}

We have seen that the choice of incentive can have a striking effect on the incentive process, even for ostensibly very similar incentives. The choice of incentive can dramatically alter the large population behavior of the process, notably for the fixation probabilities of the neutral fitness landscape. Fixation probabilities shift with changes in the parameter $q$ for the $q$-replicator and $q$-Fermi, nevertheless the ratio of the fixation probabilities of the two population types remains constant as $q$ varies, which has several consequences for the behavior of the process. Moreover, we see that the choice of incentive can internally shift the ``equilibrium'' of the process away from the boundary.

In some sense, we see ways in which fitness-proportionate selection as in the Moran process is special. In particular, it is the only incentive in the $q$-replicator family with neutral fixation probability $\fixation = 1/N$. The particular value $q=1$ is a qualitative behavior changing point for the internal equilibrium (see Figure \ref{figure_r_iss}), and is known to have unique properties related to dynamics on the probability simplex \cite{harper2011escort} \cite{c1982statistical}. This is in contrast to the analogous deterministic dynamics, where the behaviors of the various incentives are often much more similar, such as the fact that the internal equilibria of the projection and replicator dynamics are the same.


\subsection*{Methods}
All computations were performed with python code available at \url{https://github.com/marcharper/fixation}. All plots created with \emph{matplotlib} \cite{Hunter:2007}.

\subsection*{Acknowledgments}
This research was partially supported by the Office of Science (BER),
U. S. Department of Energy, Cooperative Agreement No. DE-FC02-02ER63421.


\bibliography{ref}
\bibliographystyle{plain}

\end{document}